\numberwithin{equation}{section}
\theoremstyle{plain} 
\newtheorem{theorem}{Theorem}[section]
\newtheorem{corollary}[theorem]{Corollary}
\newtheorem{lemma}[theorem]{Lemma}
\newtheorem{proposition}[theorem]{Proposition}
\theoremstyle{definition} 
\theoremstyle{definition} 
\newtheorem*{ex*}{Example}
\theoremstyle{remark} 
\theoremstyle{remark} 
\newtheorem*{remark*}{Remark}
\numberwithin{equation}{section}
\newcommand{\beqa}{\begin{eqnarray}}
\newcommand{\eeqa}{\end{eqnarray}}
\newcommand{\bseq}{\begin{subequations}}
\newcommand{\eseq}{\end{subequations}}
\newcommand{\BE}{{\operatorname{BE}}}
\newcommand{\al}{\alpha}
\newcommand{\g}{\gamma}
\newcommand{\si}{\sigma}
\newcommand{\ka}{\kappa}
\newcommand{\la}{\lambda}
\newcommand{\de}{\delta}
\newcommand{\be}{\beta}
\newcommand{\De}{\Delta}
\newcommand{\La}{\Lambda}
\newcommand{\vpi}{\varphi}
\renewcommand{\th}{\theta}
\renewcommand{\Psi}{\overline{\Phi}}
\newcommand{\ii}[1]{\,\mathbf{I}\{#1\}} 
\newcommand{\Bigii}[1]{\,\mathbf{I}\Big\{#1\Big\}} 
\newcommand{\pd}[2]{\frac{\partial#1}{\partial#2}}
\renewcommand{\P}{\operatorname{\mathsf{P}}} 
\newcommand{\E}{\operatorname{\mathsf{E}}}
\newcommand{\R}{\mathbb{R}}
\newcommand{\T}{\mathcal{T}}
\newcommand{\vp}{\varepsilon}
\newcommand{\tvp}{{\tilde{\vp}}}
\newcommand{\tbe}{{\tilde{\beta}}}
\newcommand{\ttau}{{\tilde{\tau}}}
\newcommand{\tth}{{\tilde{\theta}}}
\newcommand{\tD}{{\tilde{D}}}
\renewcommand{\le}{\leqslant}
\renewcommand{\ge}{\geqslant}
\begin{document}

\begin{frontmatter}

\title{On the Berry--Esseen bound for the Student statistic}
\runtitle{Berry--Esseen for Student}

%

\begin{aug}
\author{\fnms{Iosif} \snm{Pinelis}\thanksref{t2}\ead[label=e1]{ipinelis@mtu.edu}}
  \thankstext{t2}{Supported by NSF grant DMS-0805946}
\runauthor{Iosif Pinelis}


\address{Department of Mathematical Sciences\\
Michigan Technological University\\
Houghton, Michigan 49931, USA\\
E-mail: \printead[ipinelis@mtu.edu]{e1}}
\end{aug}

\begin{abstract}
New Berry--Esseen-type bounds, with explicit constant factors, for the distribution of the Student statistic and, equivalently, for that of the self-normalized sum of independent zero-mean random variables are obtained. These bounds are compared with the corresponding existing results.  
\end{abstract}

  
%

\begin{keyword}[class=AMS]
\kwd[Primary ]{62E17}
\kwd{60E15}
\kwd[; secondary ]{62G10}
\kwd{62F03}
\end{keyword}


\begin{keyword}
\kwd{Berry--Esseen bounds}
\kwd{probability inequalities}
\kwd{independent random variables}
\kwd{Student statistic}
\kwd{self-normalized sum}
\end{keyword}

\end{frontmatter}

\settocdepth{chapter}

\tableofcontents 

\settocdepth{subsubsection}

\theoremstyle{plain} 
\numberwithin{equation}{section}

\eject


\section{Summary and discussion}\label{intro} 


Consider the self-normalized sum 
\begin{equation*}
	T:=\frac{S}{V},
\end{equation*}
where 
\begin{equation*}
	S:=\sum_1^n X_i,\quad	V:=\sqrt{\sum_1^n X_i^2}, 
\end{equation*}
and $X_1,\dots,X_n$ are independent zero-mean random variables (r.v.'s). 
It is assumed that $T=0$ on the event $\{V=0\}$. 
For any $p\in(0,\infty)$, introduce also 
\begin{gather*}
	\be_p:=\sum_1^n\E|X_i|^p\quad\text{and}\quad\tbe_p:=\sum_1^n\E|X_i^2-\E X_i^2|^{p/2}, 
\end{gather*}
assuming that $0<\be_3<\infty$ (and hence $0<\be_2<\infty$). 

Let $\Phi$ be the standard normal distribution function. 


\begin{theorem}\label{th:nonIID}
One has 
\begin{align}
	|\P(T\le z)-\Phi(z)|&\le A_3\frac{\be_3}{\be_2^{3/2}}+A_4\frac{\tbe_4^{1/2}}{\be_2}+A_6\frac{\tbe_6}{\be_3^3\be_2^{3/2}} 
	\label{eq:nonIID}
\end{align}
for all $z\in\R$ and 
for all triples $\tau:=(A_3,A_4,A_6)$ of absolute constants belonging to the set $\T:=\{\tau_1,\dots,\tau_4\}$ of triples, 
where 
\begin{align*}
\tau_1&:=(1.61, 1.60, 1.20), \\ 
\tau_2&:=(2.01, 1.02, 0.61), \\ 
\tau_3&:=(11.38, 11.02, 11.78\times10^{-6}), \\ 
\tau_4&:=(1.34, 125377, 1.049\times10^6). 
\end{align*}
\end{theorem}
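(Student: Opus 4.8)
The plan is to \emph{linearize} the normalizer $V=\sqrt{V^2}$. Fix $z\ge0$; the case $z<0$ follows by replacing each $X_i$ with $-X_i$, which turns $T$ into $-T$. On $\{V>0\}$ we have $\{T\le z\}=\{S\le zV\}$, and by concavity of the square root, for every scale parameter $a>0$,
$$zV\le\frac z{2a}\,(V^2+a^2),\qquad \frac z{2a}\,(V^2+a^2)-zV=\frac z{2a}\,(V-a)^2\ge0 .$$
Since $\frac z{2a}(V^2+a^2)-S=-\sum_1^n\big(X_i-\tfrac z{2a}X_i^2\big)+\tfrac{za}2$ is an \emph{affine} function of the independent summands, writing $L:=\sum_1^n\big(X_i-\tfrac z{2a}X_i^2\big)$ gives the one-sided inclusion $\{S\le zV\}\subseteq\{L\le\tfrac{za}2\}$, hence the upper bound $\P(T\le z)\le\P(L\le\tfrac{za}2)$. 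For the matching lower bound the linearization gap $\frac z{2a}(V-a)^2$ must be controlled, which I would do by taking $a=\sqrt{\be_2}$ (the typical size of $V$) and bounding the probability that $V^2$ strays from its mean $\be_2$ by Chebyshev's inequality, $\P(|V^2-\be_2|>t)\le\tbe_4/t^2$, since $\Var(V^2)=\tbe_4$. Optimizing the threshold $t$ balances the gap against this deviation probability and is exactly what produces the term $A_4\,\tbe_4^{1/2}/\be_2$.

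It then remains to apply a Berry--Esseen bound to the genuine sum of independent r.v.'s $L$. After centering by $\E L=-\tfrac z{2a}\be_2$ and dividing by $\sqrt{\Var L}$ (with $\Var L\approx\be_2$ for the relevant range of $z$), the classical uniform Berry--Esseen inequality controls $|\P(L\le\cdot)-\Phi(\cdot)|$ by a Lyapunov ratio $\sum_1^n\E|Y_i-\E Y_i|^3/(\Var L)^{3/2}$, where $Y_i:=X_i-\tfrac z{2a}X_i^2$. Splitting the central third moment of $Y_i$ into the contribution of the linear part $X_i$ and that of the quadratic part $X_i^2-\E X_i^2$ yields, respectively, the leading term $A_3\,\be_3/\be_2^{3/2}$ and a higher-order term governed by $\tbe_6=\sum_1^n\E|X_i^2-\E X_i^2|^3$; here one uses $\E X_i=0$ together with H\"older and power-mean inequalities to keep every constant explicit rather than merely finite. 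For $|z|$ large, where $\Var L$ is no longer comparable to $\be_2$, both $\P(T\le z)$ and $\Phi(z)$ are close to their limits, so a separate crude estimate dispatches the tail and, after substituting the chosen scale $a$, fixes the precise form $A_6\,\tbe_6/(\be_3^3\be_2^{3/2})$ of the third term.

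The free parameters (the scale $a$, the Chebyshev threshold $t$, and the cutoff separating moderate from large $z$) would finally be chosen to balance the three contributions, and the four admissible triples $\tau_1,\dots,\tau_4$ correspond to four different allocations of slack among the terms, together with the choice of which numerical Berry--Esseen constant is fed into the bound for $L$: one trades a smaller $A_3$ for a larger $A_4$ or $A_6$, and conversely. The main obstacle I anticipate is not the linearization itself but the quantitative bookkeeping: one must track the mean, variance, and third absolute central moment of the affine summands $Y_i$ through the whole argument, combine the self-normalization gap with the Berry--Esseen error, and verify that all constants stay explicit and reasonably small \emph{uniformly in $z$} and over every admissible moment configuration. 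Producing four jointly valid triples rather than a single one means carrying this optimization through several times, which is where the bulk of the careful analysis lies.
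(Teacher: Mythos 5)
Your proposal follows essentially the same route as the paper: the one-sided linearization $zV\le\frac z{2a}(V^2+a^2)$ with $a=\sqrt{\be_2}$ is exactly the paper's convexity argument $S_z\le T_z$ with $S_z=S-z\eta/2$, $\eta=V^2-\be_2$; the Berry--Esseen step for the independent summands $X_i-\tfrac z{2a}X_i^2$ with the third-moment split into $\be_3$ and $\tbe_6$ parts, the Chebyshev-type control of the quadratic gap $\tfrac z{2a}(V-a)^2$ producing the $\tbe_4^{1/2}$ term, and the separate large-$z$/large-Lyapunov-ratio regimes all match the paper's Cases 1--3. The only substantive differences are in the tools used to sharpen constants (the paper uses a Chebyshev--Cantelli hybrid rather than plain Chebyshev, and must also account for $\Var S_z\ne\be_2$ via the terms $D(u)$ and $\tD_\vp$), which is where the specific numerical triples come from.
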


The triple $\tau_1=(1.61, 1.60, 1.20)$ of the constant factors $A_3,A_4,A_6$ was obtained trying to minimize the maximum $A_3\vee A_4\vee A_6$ of the constants; for details, see the proof (in Section~\ref{proofs}) of Theorem~\ref{th:nonIID} and especially the table at the end of that proof. 
The triple $\tau_3$ was obtained trying to minimize the effect of the 6th-order moments of the $X_i$'s. 
The triple $\tau_4$ was designed to work best when $\tbe_4$ and $\tbe_6$ are very small, that is, when 
the distribution of each $X_i$ is close to the symmetric distribution on a symmetric two-point set. 
The triples $\tau_2,\tau_3,\tau_4$ will be used in this paper to compare the upper bound in \eqref{eq:nonIID} with one due to Shao \cite{shao05}. 

In the i.i.d.\ case, that is, when the r.v.'s $X_1,\dots,X_n$ are independent copies of a r.v.\ $X$, one can improve the values $A_3,A_4,A_6$ of the absolute constants in \eqref{eq:nonIID};  
at that, let us assume without loss of generality that 
$$\E X^2=1.$$ 
Introduce 
\begin{equation}\label{eq:rho's}
\rho_3:=\E|X|^3,\quad
\rho_4:=\sqrt{\E(X^2-1)^2},\quad
\rho_6:=\frac{\E|X^2-1|^3}{\E|X|^3}. 	
\end{equation}

\begin{theorem}\label{th:IID}
If $X,X_1,\dots,X_n$ are i.i.d.\ r.v.'s with $\E X=0$, $\E X^2=1$, and $\E|X|^3<\infty$, then 
\begin{align}
	|\P(T\le z)-\Phi(z)|&\le \frac{A_3\rho_3+A_4\rho_4+A_6\rho_6}{\sqrt n} 
	\label{eq:IID}
\end{align}
for all $z\in\R$ and 
for all triples $\tau:=(A_3,A_4,A_6)$ of absolute constants belonging to the set $\tilde\T:=\{\ttau_{1,1},\dots,\ttau_{4,1}\}$ of triples, 
where 
\begin{align*}
\ttau_{1,1}&:=(1.53, 1.52, 1.34), \\ 
\ttau_{1,2}&:=(1.61, 1.60, 1.02), \\ 
\ttau_{2,2}&:=(1.96, 1.02, 0.52), \\ 
\ttau_{2.1,1}&:=(1.96, 0.99, 0.63), \\ 
\ttau_{3,1}&:=(10.94, 9.40, 11.06\times10^{-6}), \\ 
\ttau_{4,1}&:=(1.25, 8140, 92437); 
\end{align*}
here, for each $i=1,2,3,4$, the triples $\ttau_{i,j}$ are to be compared with the triple $\tau_i$ in Theorem~\ref{th:nonIID}, with the same $i$. 
\end{theorem}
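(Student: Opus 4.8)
The plan is to derive Theorem~\ref{th:IID} from Theorem~\ref{th:nonIID} by specializing to the i.i.d.\ setting and then recovering the sharper constants by redoing, under the i.i.d.\ constraint, the optimization that produced the $\tau_i$. First I would dispose of the trivial case. Since $\rho_4=\sqrt{\E X^4-1}$ and $\rho_6=\E|X^2-1|^3/\rho_3$ with $0<\rho_3<\infty$, if $\E X^4=\infty$ then $\rho_4=\infty$, and if $\E|X|^6=\infty$ then $\rho_6=\infty$; in either case the right-hand side of \eqref{eq:IID} is infinite and there is nothing to prove. Hence I may assume $\E X^6<\infty$, so that $\rho_3,\rho_4,\rho_6$ are all finite.

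Next I would substitute the i.i.d.\ moments into \eqref{eq:nonIID}. With $\E X^2=1$ one has $\be_2=n$, $\be_3=n\rho_3$, $\tbe_4=n\,\E(X^2-1)^2=n\rho_4^2$, and $\tbe_6=n\,\E|X^2-1|^3=n\rho_3\rho_6$, the last equality being precisely the definition of $\rho_6$ in \eqref{eq:rho's}. Hence
\[
\frac{\be_3}{\be_2^{3/2}}=\frac{\rho_3}{\sqrt n},\qquad
\frac{\tbe_4^{1/2}}{\be_2}=\frac{\rho_4}{\sqrt n},\qquad
\frac{\tbe_6}{\be_3}=\rho_6,
\]
so that, up to the common factor $1/\sqrt n$, the three summands on the right of \eqref{eq:nonIID} become $A_3\rho_3$, $A_4\rho_4$ and $A_6\rho_6$. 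This already yields \eqref{eq:IID} with the non-i.i.d.\ triples $\tau_i$ in place of the $\ttau_{i,j}$; it is a valid but suboptimal baseline.

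To replace the $\tau_i$ by the sharper $\ttau_{i,j}$ I would return to the proof of Theorem~\ref{th:nonIID} in Section~\ref{proofs}. Call a triple \emph{admissible} if the associated master bound holds for the whole relevant class of summand distributions. Because i.i.d.\ laws form a subclass of all independent families, every triple admissible in the non-i.i.d.\ setting (in particular each $\tau_i$) remains admissible here; and redoing the argument with each general-case sub-estimate replaced by its i.i.d.\ sharpening, and carrying out the final optimization only over this subclass, enlarges the admissible set. The triples $\ttau_{i,j}\in\tilde\T$ are then read off as convenient points of this larger feasible region, which is why, relative to $\tau_i$, some components of $\ttau_{i,j}$ go down while others (e.g.\ $A_6$ for $\ttau_{1,1}$) may be traded upward, and why a single index $i$ can now carry several triples.

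The main obstacle will be the same one met in Theorem~\ref{th:nonIID}: verifying, uniformly in $z\in\R$ and over all laws of $X$ with $\E X=0$ and $\E X^2=1$, that the master bound holds with the claimed reduced constants. I expect the delicate entries to be those with a very small $A_6$ (such as $\ttau_{3,1}$, where $A_6\sim10^{-5}$): these should require that the sixth-order contribution carried by $\rho_6$ genuinely cancels to leading order in the i.i.d.\ expansion, rather than merely being bounded, so that almost all of it can be absorbed into the $A_3$- and $A_4$-terms. Establishing this cancellation, and confirming that the (largely computer-assisted) optimization behind the $\ttau_{i,j}$ respects the i.i.d.\ constraint at every node of the verification table, is where the real work lies.
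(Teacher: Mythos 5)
Your reduction step is essentially sound: in the i.i.d.\ case with $\E X^2=1$ one has $\be_2=n$, $\be_3=n\rho_3$, $\tbe_4^{1/2}=\sqrt n\,\rho_4$ and $\tbe_6=n\rho_3\rho_6$, and since $\rho_3=\E|X|^3\ge(\E X^2)^{3/2}=1$ the third summand of \eqref{eq:nonIID} is indeed dominated by $A_6\rho_6/\sqrt n$ (though it does not \emph{equal} it, as you assert: the ratio appearing in \eqref{eq:nonIID} involves $\be_3^3$, not $\be_3$, so an extra factor of order $\rho_3^{-2}\le1$ appears). This yields \eqref{eq:IID} with the non-i.i.d.\ triples $\tau_i$ --- a valid baseline, as you say --- but with none of the triples in $\tilde\T$.

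The genuine gap is in the second half: you never identify \emph{what} quantitatively improves when one restricts to i.i.d.\ summands, and without that there is no mechanism for beating the $\tau_i$. The paper's proof of Theorem~\ref{th:IID} consists of exactly one substantive change to the proof of Theorem~\ref{th:nonIID}: the Berry--Esseen constant $0.56$ (Shevtsova, non-identically distributed summands) used in \eqref{eq:BE<initial} is replaced by Tyurin's constant $0.4785$, valid in the i.i.d.\ case; the parameters $\al,\vp_4,\vp_3,\vp_2,\ka,\th_3,\th_4$ are then re-optimized numerically (or, for the triples $\ttau_{i,2}$, simply kept at their non-i.i.d.\ values). Your proposed mechanism --- that ``the sixth-order contribution carried by $\rho_6$ genuinely cancels to leading order in the i.i.d.\ expansion'' --- is not what happens and would not deliver the result: nothing cancels, and the tiny $A_6$ in $\ttau_{3,1}$ arises, exactly as for $\tau_3$ in the non-i.i.d.\ case, from tuning the free parameters ($\al$ near $1$, $\th_3$ small) so that $A_{6,3}=\frac{0.07}{\al^2}\big(\frac{\th_3^2}{1-\th_3}\big)^{3/2}$ is small at the expense of $A_3$ and $A_4$. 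Relatedly, your picture of a uniformly ``enlarged feasible region'' misses why some components do not move at all: only the terms containing the Berry--Esseen constant shrink (by the factor $0.4785/0.56$), while terms such as $A_{3,1}=1/\vp_3$ from the ``small $n$'' case are untouched, which is precisely why, e.g., $\ttau_{1,2}$ shares its first two components with $\tau_1$ and only its $A_6$ decreases.
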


For $n\ge2$, the Student statistic 
\begin{equation*}
	t:=\frac{\overline{X}\sqrt n}{\sqrt{\frac1{n-1}\,\sum_1^n(X_i-\overline{X})^2}}, 
\end{equation*}
where $\overline{X}:=\frac1n\sum_1^n X_i$, 
can be expressed as a monotonic transformation of the self-normalized sum $T$: 
\begin{equation}\label{eq:t}
	t=\sqrt{\frac{n-1}n}\,\frac T{\sqrt{1-T^2/n}}. 
\end{equation}
Therefore, one immediately has 
\begin{corollary}\label{cor:}
  Theorems~\ref{th:nonIID} and \ref{th:IID} hold if $\P(T\le z)-\Phi(z)$ is replaced there by 
  $\P(t\le z)-\Phi_n(z)$, where 
\begin{equation}\label{eq:Phi_n}
	  \Phi_n(z):=\Phi\Big(\frac z{\sqrt{1+(z^2-1)/n}}\Big). 
\end{equation}
\end{corollary}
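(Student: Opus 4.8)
The plan is to exploit the explicit monotone relation \eqref{eq:t} between $t$ and $T$, reducing the claim for the Student statistic to the already-established statements for the self-normalized sum. First I would record that, by the Cauchy--Schwarz inequality, $|S|\le\sqrt n\,V$, so $T$ takes values in $[-\sqrt n,\sqrt n]$. On the open interval $(-\sqrt n,\sqrt n)$ the map
\[
f(T):=\sqrt{\tfrac{n-1}n}\,\frac T{\sqrt{1-T^2/n}}
\]
from \eqref{eq:t} is strictly increasing, since its derivative equals $\sqrt{(n-1)/n}\,(1-T^2/n)^{-3/2}>0$, and it is an odd bijection of $(-\sqrt n,\sqrt n)$ onto all of $\R$, with $f(0)=0$ and $\sign f(T)=\sign T$.

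Next I would invert this relation. Putting $z=f(w)$ and squaring gives $z^2(n-w^2)=(n-1)w^2$, whence $w^2=nz^2/(n-1+z^2)$ and, restoring the sign via $\sign w=\sign z$,
\[
w=f^{-1}(z)=\frac z{\sqrt{1+(z^2-1)/n}}.
\]
Comparing with \eqref{eq:Phi_n}, this value is precisely the argument of $\Phi$ in the definition of $\Phi_n$, so that $\Phi_n(z)=\Phi\bigl(f^{-1}(z)\bigr)$ for every $z\in\R$.

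It then remains to assemble these facts. Because $f$ is an increasing bijection, for each fixed $z\in\R$ the events $\{t\le z\}$ and $\{T\le f^{-1}(z)\}$ coincide; the degenerate event $\{V=0\}$, on which both $T$ and $t$ vanish, causes no difficulty, as it is included in $\{T\le f^{-1}(z)\}$ exactly when $z\ge0$, consistently with $t=0$ there. Writing $w:=f^{-1}(z)$, we thus obtain
\[
\P(t\le z)-\Phi_n(z)=\P(T\le w)-\Phi(w).
\]
Since the right-hand sides of \eqref{eq:nonIID} and \eqref{eq:IID} depend only on the moments of the $X_i$ and not on the argument of the distribution functions, the bounds of Theorems~\ref{th:nonIID} and \ref{th:IID}, valid for all real arguments, apply verbatim to $|\P(T\le w)-\Phi(w)|$ and hence to $|\P(t\le z)-\Phi_n(z)|$, which proves the corollary.

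I do not expect a substantive obstacle here: the corollary is essentially a change of variable. The only points requiring a little care are the algebraic inversion of \eqref{eq:t}, the verification that the resulting $f^{-1}(z)$ coincides with the argument in \eqref{eq:Phi_n}, and the minor bookkeeping at the boundary values $T=\pm\sqrt n$ and on the null event $\{V=0\}$.
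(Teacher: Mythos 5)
Your proof is correct and follows exactly the route the paper intends: the paper derives the corollary ``immediately'' from the monotone relation \eqref{eq:t}, and you have simply filled in the inversion $w=f^{-1}(z)$, the identity $\Phi_n(z)=\Phi\bigl(f^{-1}(z)\bigr)$, and the coincidence of the events $\{t\le z\}$ and $\{T\le f^{-1}(z)\}$. The boundary bookkeeping at $T=\pm\sqrt n$ and on $\{V=0\}$ is handled adequately.
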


A Berry-Esseen type of bound of the optimal order for the Student statistic of i.i.d.\ $X_i$'s was obtained 
in 1996 by Bentkus and G{\"o}tze \cite{bent96}, using a Fourier transformation method. This was extended to the non-i.i.d.\ case by Bentkus, Bloznelis, and G{\"o}tze \cite{bbg96}, whose result can be rewritten as follows: 
\begin{equation}\label{eq:BG}
	|\P(t\le z)-\Phi\big(z\sqrt{\tfrac n{n-1}}\,\big)|
	\le C_2\g_2+C_3\g_3,
\end{equation}
where $C_2$ and $C_3$ are absolute constants, 
\begin{equation}\label{eq:ga's}
	\g_2:=\frac1{\be_2}\sum_1^n\E X_i^2\Bigii{|X_i|>\frac{\sqrt{\be_2}}2},\quad \g_3:=\frac1{\be_2^{3/2}}\sum_1^n\E|X_i|^3\Bigii{|X_i|\le\frac{\sqrt{\be_2}}2}. 
\end{equation} 

Note that $t\sim T$ as $n\to\infty$. The function $\Phi_n$, defined by \eqref{eq:Phi_n}, may be considered as an improper distribution function, with the ``impropriety'' $1-\big(\Phi_n(\infty)-\Phi_n(-\infty)\big)=2\big(1-\Phi(\sqrt n)\big)\sim\sqrt{\frac2{\pi n}}\,e^{-n/2}$ for large $n$, which is much less than $\frac1{\sqrt n}$. 
If $n$ is not very large, the tail probability $1-\Phi_n(z)$ may be much greater than $1-\Phi(z)$, which appears to correspond qualitatively to the fact that the tail of the Student distribution is significantly heavier than the standard normal tail when the number of degrees of freedom (d.f.) is not large. 
This heuristics appears to be confirmed by Figure~\ref{fig:tails}, for $n=10$; the pictures for $n=5$ and $n=20$ look quite similarly. 

\begin{figure}[htbp]
	\centering		
	\includegraphics[scale=.9]{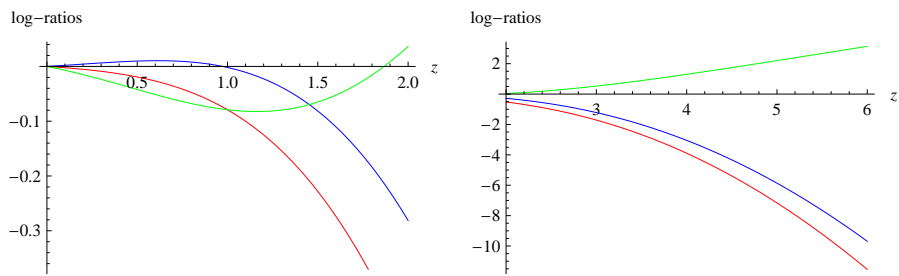}
	\caption{Logarithms of the ratios of the tail functions $1-\Phi(\cdot)$ (red), $1-\Phi\big(\cdot\sqrt{\tfrac n{n-1}}\big)$ (blue), and $1-\Phi_n(\cdot)$ (green) to the tail function of the Student distribution with $n-1$ d.f.}
	\label{fig:tails}
\end{figure}
\noindent It appears that on the interval $[1.5,\infty)$ the tail function $1-\Phi_n(\cdot)$ is closer to that of the Student distribution than the tail functions $1-\Phi(\cdot)$ and 
$1-\Phi\big(\cdot\sqrt{\tfrac n{n-1}}\big)$ are.
So, while the method of the proof (given in Section~\ref{proofs}) appears to allow one to obtain analogs of Theorems~\ref{th:nonIID} and \ref{th:IID} for $\P(t\le z)-\Phi(z)$ in place of $\P(T\le z)-\Phi(z)$ or $\P(t\le z)-\Phi_n(z)$, such analogs will not be pursued here.  


Anyway, the following proposition shows that $\Phi_n(z)$ differs from $\Phi(z)$ by much less than $1/\sqrt n$, uniformly in $z\in\R$. 

\begin{proposition}\label{prop:Phi_n-Phi}
For all $n>1$ and $z\in\R$
\begin{gather}
	|\Phi(z)-\Phi_n(z)|<\frac C{n-1}, \quad\text{where} \label{eq:Phi_n-Phi}\\ 
	C:=\Big(k-\frac12\Big)e^{-k}\sqrt{\frac k\pi}=0.162\dots\quad\text{and}\quad
	k:=1+\frac{\sqrt3}2;  \notag
\end{gather}
this constant factor, $C$, is the best possible in \eqref{eq:Phi_n-Phi}. 
\end{proposition}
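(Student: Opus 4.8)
The plan is to reduce to $z\ge0$ by symmetry, change the variable of the bound from $n$ to $a:=g_n(z)$, where $g_n(z):=z\big(1+(z^2-1)/n\big)^{-1/2}$, and recognize $C$ as the maximum of the limiting profile of $(n-1)(\Phi-\Phi_n)$. Write $\Delta_n(z):=\Phi(z)-\Phi_n(z)=\Phi(z)-\Phi(g_n(z))$ and $\varphi:=\Phi'$. Since $g_n(-z)=-g_n(z)$ and $\Phi(-x)=1-\Phi(x)$, one checks that $\Delta_n$ is odd, so it suffices to bound $\Delta_n$ for $z>0$ (with $\Delta_n(0)=0$). For $z>0$ put $a:=g_n(z)$; from $a^2=nz^2/(n-1+z^2)$ one solves $n-1=\dfrac{z^2(a^2-1)}{z^2-a^2}$, and as $n$ runs over $(1,\infty)$ the point $a$ runs monotonically over the open interval between $1$ and $z$ (with $a=1$ at $n=1$ and $a\to z$ as $n\to\infty$). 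Hence
\[
(n-1)\,\Delta_n(z)=H(a,z):=\frac{z^2(a^2-1)}{z^2-a^2}\int_a^z\varphi(u)\,\d u ,
\]
and the whole statement becomes: $|H(a,z)|<C$ for all $z>0$ and all $a$ strictly between $1$ and $z$.

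To identify $C$, let $a\to z$: then $H(a,z)\to\psi(z):=\tfrac12 z(z^2-1)\varphi(z)=-\tfrac12(z^2-1)\varphi'(z)$. Solving $\psi'=0$ reduces to $z^4-4z^2+1=0$, i.e.\ $z^2\in\{2\pm\sqrt3\}$; the maximum of $|\psi|$ on $(0,\infty)$ is attained at $z_*=\sqrt{2+\sqrt3}=\sqrt{2k}$ (the other root gives a strictly smaller value), where $z_*^2/2=k$, and equals $\tfrac12\sqrt{2k}\,(2k-1)\varphi(z_*)=\big(k-\tfrac12\big)e^{-k}\sqrt{k/\pi}=C$. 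Taking $z=z_*$ and letting $a\to z_*$ shows $\sup H=C$ is approached, so no smaller constant can work; it remains to prove the strict bound $|H|<C$.

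This last step is the crux. On the open parameter domain $H$ is smooth, so any interior extremum is stationary, $\partial_aH=\partial_zH=0$. A direct computation shows $\partial_zH=0$ is equivalent to $\int_a^z\varphi=\tfrac{z(z^2-a^2)}{2a^2}\varphi(z)$, whence $H=\tfrac{z^3(a^2-1)}{2a^2}\varphi(z)$; feeding this into $\partial_aH=0$ collapses to the clean relation $z(z^2-1)\varphi(z)=a(a^2-1)\varphi(a)$, i.e.\ $\psi(a)=\psi(z)$, and then
\[
H=\frac{z^2(a^2-1)}{a^2(z^2-1)}\,\psi(z)=\frac{1-a^{-2}}{1-z^{-2}}\,\psi(z).
\]
Because $a$ lies strictly between $1$ and $z$, the prefactor $\dfrac{1-a^{-2}}{1-z^{-2}}$ lies in $(0,1)$ in both cases $z>1$ and $0<z<1$, so $|H|<|\psi(z)|\le C$ at every interior stationary point. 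It remains to control the boundary: $a\to1$ gives $H\to0$; $a\to z$ gives $H\to\psi(z)$ with $|\psi|\le C$, equality only in the non-attained limit $z=z_*$; and, for $z>1$, $z\to\infty$ gives $H\to(a^2-1)\Psi(a)$ (with $\Psi:=1-\Phi$), which by the Mills bound $\Psi(a)<\varphi(a)/a$ is at most $\tfrac{a^2-1}{a}\varphi(a)$, a quantity peaking below $0.11<C$. By continuity and compactness of the compactified domain, the value $C$ is approached only along the diagonal at $z_*$ and is never attained, yielding $|H(a,z)|<C$ everywhere, which is exactly \eqref{eq:Phi_n-Phi} together with the sharpness of $C$.

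The main obstacle is the interior-stationarity analysis; the decisive point is that the two stationarity equations combine into the single symmetric identity $\psi(a)=\psi(z)$ plus the factored form of $H$, and it is precisely the factor $\tfrac{1-a^{-2}}{1-z^{-2}}\in(0,1)$ that forces $|H|<|\psi|\le C$. The only remaining technical care is the $z\to\infty$ boundary estimate (dispatched by the Mills ratio, with ample room to spare) and the compactness bookkeeping needed to upgrade ``$\sup H=C$, not attained'' into the pointwise strict inequality.
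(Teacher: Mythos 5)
Your proof is correct, but it takes a genuinely different route from the paper's. The paper treats $n$ as the variable: it sets $\La(a,z):=\Phi\big(z/\sqrt{1+a(z^2-1)}\big)$, so that $\Phi_n=\La(\frac1n,\cdot)$ and $\Phi=\La(0,\cdot)$, applies the mean value theorem in $a$ on $[0,\frac1n]$, and computes $\pd\La a(b,z)=\frac{u(1-u^2)\varphi(u)}{2(1-b)}$ with $u=u_{b,z}$; the whole problem then collapses to the single-variable fact $\sup_u|u(1-u^2)\varphi(u)|=2C$, with strictness coming from $\frac1{1-b}<\frac1{1-a}$ and sharpness from l'Hospital as $a\downarrow0$ --- about five lines in total. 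You instead fix $n$ implicitly via the substitution $a=g_n(z)$ and run a two-variable critical-point analysis of $H(a,z)=(n-1)\big(\Phi(z)-\Phi_n(z)\big)$ over the open region between the lines $a=1$ and $a=z$. Your key computations check out: the stationarity equations do combine into $\psi(a)=\psi(z)$ together with $H=\frac{1-a^{-2}}{1-z^{-2}}\,\psi(z)$, the prefactor does lie in $(0,1)$ in both cases $z>1$ and $0<z<1$, the limiting profile $\psi(z)=\frac12 z(z^2-1)\varphi(z)$ has $\sup|\psi|=C$ attained only at $z_*=\sqrt{2+\sqrt3}$, and the $z\to\infty$ edge is safely below $C$ by Mills' ratio. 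Note that your $\psi$ is exactly (half of) the paper's extremal function $u(1-u^2)\varphi(u)$, so both arguments identify the same extremal configuration; the difference is that the MVT reduction lets the paper bypass entirely the boundary and compactification bookkeeping that your approach requires (you should, for completeness, also record the $z\to0$ edge and the corner $a,z\to\infty$ together, both of which give $H\to0$ by the superpolynomial decay of $\varphi$ and $1-\Phi$). What your approach buys is an exact description of where the two-parameter supremum lives (only on the diagonal $a=z$ at $z_*$, i.e., only in the limit $n\to\infty$); what the paper's buys is brevity and the automatic strictness of the bound for every finite $n$.
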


One may be concerned that it is more natural to compare the distribution function of the statistic $t$ \big(as in \eqref{eq:t}, for general zero-mean $X_i$'s\big), not with $\Phi$ or $\Phi_n$, but with the distribution function (say $F_{n-1}$) of Student's distribution with $n-1$ d.f.\ --- that is, with the distribution function of the statistic $t$ for i.i.d.\ standard normal $X_i$'s. 
However, as shown in \cite{closeness-student}, 
\begin{equation*}
	|F_{n-1}(z)-\Phi(z)|<\frac{\tilde C}{n-1} \quad\text{with}\quad
	\tilde C=0.158\dots
\end{equation*}
for all $n\ge5$ and $z\in\R$. 
Therefore and in view of Proposition~\ref{prop:Phi_n-Phi}, 
$F_{n-1}(z)$ 
differs from $\Phi_n(z)$ by much less than $1/\sqrt n$, uniformly in $z\in\R$. 
Thus, Corollary~\ref{cor:} is quite relevant, notwithstanding the mentioned concern.

In the i.i.d.\ case, Nagaev \cite[(1.18)]{nag02} stated an inequality, which reads as follows (in the conditions of Theorem~\ref{th:IID}): for all $z\in\R$ 
\begin{equation}\label{eq:nag}
	|\P(T\le z)-\Phi(z)|<\Big(4.4\E|X|^3+\frac{\E X^4}{\E|X|^3}+\E|X^2-1|^3\Big)\,\frac1{\sqrt n}. 
\end{equation}
However, 
there are a number of mistakes 
in the proof of \eqref{eq:nag} in \cite{nag02}. 
It is also stated in \cite{nag02}, again in the i.i.d.\ case, that 
\begin{equation*}
	|\P(T\le z)-\Phi(z)|<\frac{36\E|X|^3+9}{\sqrt n}.  
\end{equation*}
Using Stein's method, Shao \cite{shao05} obtained a tighter and more general bound, also with explicit constants but without the i.i.d.\ assumption: 
\begin{align}
	|\P(T\le z)-\Phi(z)|&\le
	10.2\g_2+25\g_3 \label{eq:BS} \\
	&\le25{\be_p}/{\be_2^{p/2}} \notag
\end{align}
for all $p\in[2,3]$, with the same $\g_2$ and $\g_3$ as in \eqref{eq:ga's}. 
More recently, a Berry--Esseen bound for $T$ was obtained in \cite{bourguin} for i.i.d.\ standard normal $X_i$'s by means of Malliavin calculus. 

Let us compare the bounds in \eqref{eq:nonIID} and \eqref{eq:BS}. 
At that, let us restrict the attention to i.i.d.\ r.v.'s $X,X_1,\dots,X_n$. 

Consider first the case when $X$ has a two-point zero-mean distribution, so that \break 
$\P(X\in\{-a,b\})=1$ 
for some positive real numbers $a$ and $b$; that is,  
$$\P(X=b)=\frac a{a+b}=1-\P(X=-a).$$  
This case appears especially interesting, as any zero-mean distribution can be represented as a mixture of  two-point zero-mean distributions --- see e.g.\ \cite{disintegr}. 
Without loss of generality, assume that $b\ge a$ and $ab=1$. 
Then $b\ge1$ and $\E X^2=1$, 
and hence the bound in \eqref{eq:nonIID} (with the triple $\tau=\tau_3$ of constants $A_3,A_4,A_6$)  
is no greater than $(11.38\rho_3+11.02\rho_4+11.78\times10^{-6}\,\rho_6)/\sqrt n$, 
where again the $\rho_j$'s are as in \eqref{eq:rho's}, so that   
$\rho_3
=\frac{b^4+1}{b(b^2+1)}
$, 
$\rho_4
=b-1/b
$, and 
$\rho_6
=(b-1/b)^3
$.  
On the other hand, if $b>\sqrt{n}/2$, then 
the bound in \eqref{eq:BS} is no less than 
$10.2\frac b{b+1/b}\ge5.1
>1$. 
So, without loss of generality $b\le\sqrt{n}/2$ and hence 
the bound in \eqref{eq:BS} equals $25\rho_3/\sqrt{n}$. 
Thus (preferably with the help of the Mathematica command \verb9Reduce9 or similar tools), one finds that   
the bound in \eqref{eq:BS} will be less than the bound in \eqref{eq:nonIID} 
only if $b>469$, 
that is, only if the ``asymmetry index'' $b/a$ is greater than $469^2=219961$; 
at that, the inequality $b\le\sqrt{n}/2$ implies that $n$ must be no less that $(2b)^2>(2\times469)^2=879844$. 
One concludes that, for i.i.d.\ $X_i$'s with a common two-point distribution, \eqref{eq:nonIID} is better than \eqref{eq:BS} unless both the sample size $n$ and the asymmetry index are very large.  
Also, in the ``symmetric'' case when $b=a=1$, the bound in \eqref{eq:nonIID} (with $\tau=\tau_4$) reduces to $1.34/\sqrt n$, which is $\frac{25}{1.34}>18$ times as small as the bound in \eqref{eq:BS} \big(for $n\ge(2b)^2=4$\big). 

While the two-point distributions may be of particular interest, they are of a bounded support set, and hence all their moments are finite. On the other hand, one may object that the bounds given in Theorems~\ref{th:nonIID} and \ref{th:IID} will be infinite and hence useless if the 4th-order moments of the $X_i$'s are infinite. 
However, this concern is easily addressed via truncation. 

For a minute, let $X$ denote any zero-mean r.v. If the distribution of $X$ is continuous, then for each $b\in[0,\infty]$ there is some $a\in[0,\infty]$ such that the r.v.\ $X^{a,b}:=X\ii{-a<X<b}$ is zero-mean; the same holds in the case when the distribution of $X$ is symmetric (about $0$) --- then one can simply take $a=b$. If the zero-mean distribution of $X$ is not continuous or symmetric, one can use randomization, say as in \cite{disintegr}, to still find, for each $b\in[0,\infty]$, some $a\in[0,\infty]$ and some zero-mean r.v.\ $X^{a,b}$ such that $\P(-a\le X^{a,b}\le b)=1$ and $X^{a,b}=X$ on the event $\{-a<X<b\}$; let us refer to any such r.v.\ $X^{a,b}$ as a zero-mean truncation of the zero-mean r.v.\ $X$. 
\big(One could similarly base an appropriate construction on the so-called Winsorization $(-a)\vee(X\wedge b)$ instead of the truncation $X\ii{-a<X<b}$.\big)  

Now let $X_1,\dots,X_n$ be zero-mean r.v.'s as in Theorem~\ref{th:nonIID} or \ref{th:IID}. 
Respectively, let 
$B(X_1,\dots,X_n)$ denote (for any of the triples $\tau_1,\dots,\tau_4,\ttau_{1,1},\dots,\ttau_{4,1}$), either one of the bounds in \eqref{eq:nonIID} or \eqref{eq:IID}, as it depends on (the individual distributions of) the $X_i$'s. 
So, $B(X_1,\dots,X_n)$ denotes the bound in \eqref{eq:nonIID} under the conditions of Theorem~\ref{th:nonIID}, and it denotes the bound in \eqref{eq:IID} under the conditions of Theorem~\ref{th:IID}. 
The following corollary of Theorems~\ref{th:nonIID} and \ref{th:IID} is immediate: 

\begin{corollary}\label{cor:trunc}
Under the conditions of Theorem~\ref{th:nonIID} or \ref{th:IID},  
for each $i\in\{1,\dots,n\}$ let $X_i^{a_i,b_i}$ be a zero-mean truncation of $X_i$. 
Then for all $z\in\R$ 
\begin{equation}\label{eq:<B}
	|\P(T\le z)-\Phi(z)|\le\P\Big(\bigcup_1^n\{X_i\notin(-a_i,b_i)\}\Big)
	+B(X_1^{a_1,b_1},\dots,X_n^{a_n,b_n}). 
\end{equation}
\end{corollary}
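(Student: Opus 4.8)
The plan is to reduce the claim to the already-established Theorems via an elementary coupling between the original self-normalized sum and the one built from the truncations. I would write $S':=\sum_1^n X_i^{a_i,b_i}$, $V':=\sqrt{\sum_1^n (X_i^{a_i,b_i})^2}$, and $T':=S'/V'$ (with $T':=0$ on $\{V'=0\}$). By the construction of a zero-mean truncation recalled just before the statement, each $X_i^{a_i,b_i}$ is zero-mean, the family $(X_i^{a_i,b_i})_1^n$ may be taken independent (using independent randomizations where needed), and the $X_i^{a_i,b_i}$ are bounded, so all their moments are finite and the hypotheses of Theorem~\ref{th:nonIID} (respectively Theorem~\ref{th:IID} in the i.i.d.\ case) hold for them. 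Applying that theorem to the truncations gives, for all $z\in\R$,
\[
|\P(T'\le z)-\Phi(z)|\le B(X_1^{a_1,b_1},\dots,X_n^{a_n,b_n}),
\]
with $B$ denoting the corresponding right-hand side in \eqref{eq:nonIID} or \eqref{eq:IID}.

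Next I would introduce the ``good'' event $E:=\bigcap_1^n\{X_i\in(-a_i,b_i)\}$, whose complement is $E^c=\bigcup_1^n\{X_i\notin(-a_i,b_i)\}$, the set whose probability appears in \eqref{eq:<B}. On $E$ one has $X_i=X_i^{a_i,b_i}$ for every $i$, whence $S=S'$, $V=V'$, and therefore $T=T'$ on $E$; this identity includes the degenerate sub-event $\{V=0\}\cap E=\{V'=0\}\cap E$, on which $T=0=T'$ by the standing convention, so $T=T'$ holds on all of $E$ without exception. Splitting $\P(T\le z)$ and $\P(T'\le z)$ over $E$ and $E^c$ and using $\{T\le z\}\cap E=\{T'\le z\}\cap E$, the two leading terms cancel upon subtraction, leaving
\[
\P(T\le z)-\P(T'\le z)=\P(\{T\le z\}\cap E^c)-\P(\{T'\le z\}\cap E^c).
\]

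Both quantities on the right lie in $[0,\P(E^c)]$, so their difference is at most $\P(E^c)$ in absolute value, giving $|\P(T\le z)-\P(T'\le z)|\le\P(E^c)$. Combining this with the triangle inequality $|\P(T\le z)-\Phi(z)|\le|\P(T\le z)-\P(T'\le z)|+|\P(T'\le z)-\Phi(z)|$ and the displayed bound for $T'$ yields exactly \eqref{eq:<B}, since $\P(E^c)=\P\big(\bigcup_1^n\{X_i\notin(-a_i,b_i)\}\big)$.

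There is essentially no hard analytic step here, which is why the corollary is labeled immediate: the $\P(E^c)$ term absorbs all the discrepancy caused by truncation, and the remainder is Theorem~\ref{th:nonIID} or \ref{th:IID} applied verbatim to the bounded r.v.'s $X_i^{a_i,b_i}$. The only points requiring care are bookkeeping rather than estimation, and these I expect to be the main (minor) obstacle: arranging that the truncations are jointly independent, and checking that the coupling identity $T=T'$ genuinely holds on the whole of $E$, including the degenerate event $\{V=0\}$ where the convention $T:=0$ is in force.
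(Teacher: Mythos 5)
Your argument is correct and is exactly the intended one: the paper states this corollary without proof as ``immediate,'' and the coupling $T=T'$ on the good event $E$, the bound $|\P(T\le z)-\P(T'\le z)|\le\P(E^c)$, and an application of Theorem~\ref{th:nonIID} or \ref{th:IID} to the (bounded, hence finite-moment) truncations is precisely the reasoning being invoked. Your attention to the degenerate event $\{V=0\}$ and to the joint independence of the randomized truncations is careful bookkeeping that the paper leaves implicit.
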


Note that the upper bound in \eqref{eq:<B} can be expressed only in terms of the individual distributions of the $X_i$'s (rather than their joint distribution), since 
$$\P\Big(\bigcup_1^n\{X_i\notin(-a_i,b_i)\}\Big)=1-\prod_1^n\P\big(X_i\in(-a_i,b_i)\big).$$ 
So, when the bound in \eqref{eq:nonIID}, \eqref{eq:IID}, \eqref{eq:BG}, or \eqref{eq:BS} can be computed, usually the ``truncated'' bound in \eqref{eq:<B} can be computed as well. 

One may want to compare the bound in \eqref{eq:<B} with that in \eqref{eq:BS} or even with the ``truncated'' version of the latter bound: 
\begin{equation}\label{eq:BS-trunc}
	1-\prod_1^n\P\big(X_i\in(-a_i,b_i)\big)
	+10.2\tilde\g_2+25\tilde\g_3, 
\end{equation}
where $\tilde\g_2$ and $\tilde\g_3$ are obtained from $\g_2$ and $\g_3$ by replacing the $X_i$'s with their zero-mean truncations $X_i^{a_i,b_i}$, as in Corollary~\ref{cor:trunc}. 

Let us make such a comparison when the $X_i$'s are i.i.d.\ with a common distribution, which is either the  Student distribution with $d>0$ degrees of freedom or the (centered) Pareto distribution with the density 
\begin{equation*}
	f_s(x):=s\Big(x+\frac s{s-1}\Big)^{-s-1}\Bigii{x>-\frac1{s-1}}, 
\end{equation*}
where $s$ is a parameter with values in the interval $(1,\infty)$. 
Clearly, Student's distribution with $d$ degrees of freedom is symmetric, with heavy tails for small $d$ and light ones for large $d$, whereas the Pareto distribution with parameter $s$ is highly skewed to the right, with a heavy right tail for small $s>1$ and a light one for large $s$. 
In keeping with the ``i.i.d.'' assumption, let us consider the ``truncated'' bounds in \eqref{eq:<B} and \eqref{eq:BS-trunc} with $b_1=\dots=b_n=:b$ and, accordingly, $a_1=\dots=a_n=:a$; note that in each of the two cases under consideration (Student's or Pareto's), the value of $a$ is uniquely determined by that of $b$. 
Then,  
moreover, let us (numerically) minimize the ``truncated'' bounds in $b$. 
The results are shown in Figures~\ref{fig:t-distr5} and \ref{fig:pareto2}. 
There, the graphs are shown: of the bound in \eqref{eq:BS} (blue), of the minimized ``truncated'' bound \eqref{eq:BS-trunc} (magenta), of the bound in \eqref{eq:nonIID} (red), and of the minimized ``truncated'' bound in \eqref{eq:<B} (green) --- for sample sizes $n\in\{10,100,1000,10000\}$, $d\in[2.5,20]$, and $s\in[3.5,20]$; 
at that, for the ``red'' and ``green'' bounds the triple $\tau_2=(2.01, 1.02, 0.61)$ of constant factors in \eqref{eq:nonIID} is used. 

\begin{figure}[H] 
	\centering
		\includegraphics[width=.95\textwidth]{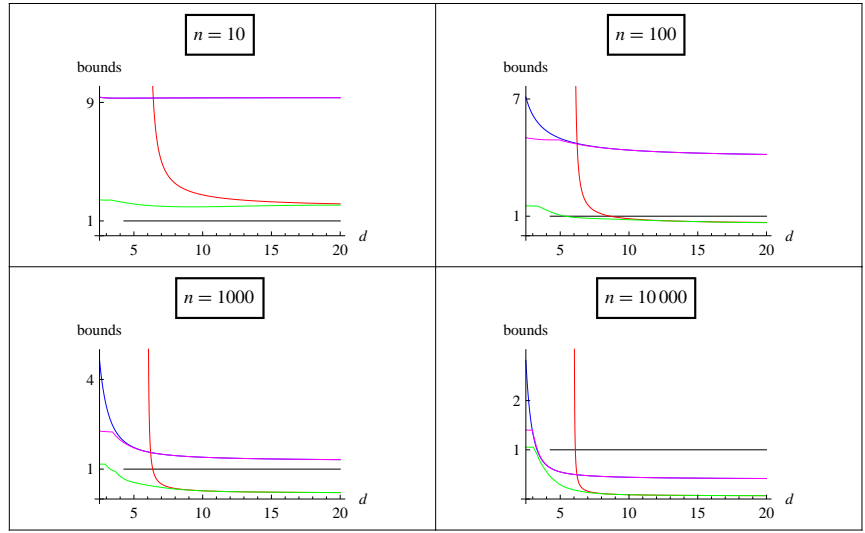}
	\caption{The bounds in the case of Student's distribution with $d$ degrees of freedom.}
	\label{fig:t-distr5}
\end{figure}

\begin{figure}[H] 
	\centering
		\includegraphics[width=.95\textwidth]{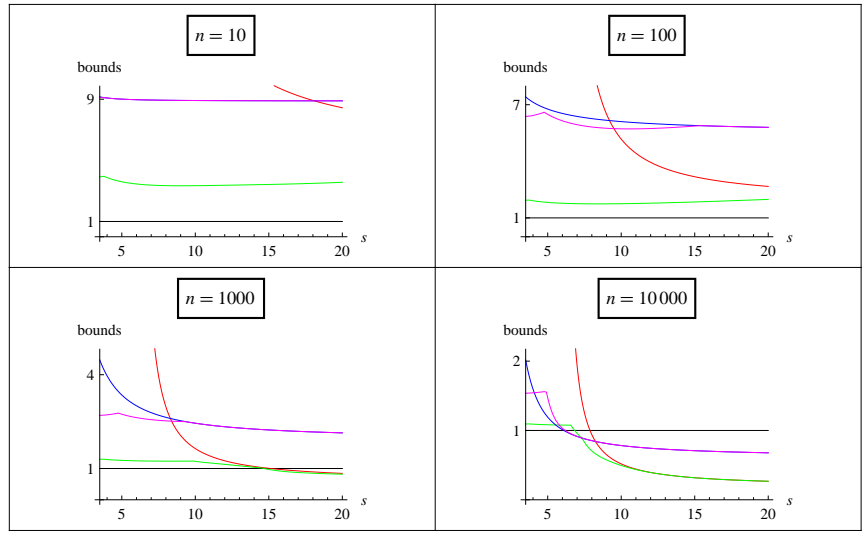}
	\caption{The bounds in the case of Pareto's distribution with parameter $s$.}
	\label{fig:pareto2}
\end{figure}

These pictures suggest the following. 
\begin{enumerate} 
	\item Predictably, truncation helps significantly 
	only when the tails are heavy enough --- that is, for small enough values of the parameters $d$ and $s$. Predictably as well, truncation is much more useful with the bound in 
\eqref{eq:nonIID} than it is with that in \eqref{eq:BS}. 
	\item For Student's and Pareto's distributions, even the minimized ``truncated'' bound in \eqref{eq:BS-trunc} is nontrivial (that is, less than $1$) only if $n$ is greater than $1000$ (or even a few thousands). 
	In fact, this bound is not much less than $0.5$ even for $n=10000$ and light tails.  
	For instance, for $n=10000$ and Student's distribution with $d=20$ d.f., the bound in \eqref{eq:BS} and the minimized bound in \eqref{eq:BS-trunc} are both $\approx0.417$, whereas the bound in \eqref{eq:nonIID} and the minimized bound in \eqref{eq:<B} are both $\approx0.068$ \big(again, with $\tau=\tau_2=(2.01, 1.02, 0.61)$\big). 
	\item Figure~\ref{fig:pareto2}, for the Pareto case, as as well as other considerations (see e.g.\ \cite{asymm,disintegr} and discussion therein) suggest that the Student statistic may not be appropriate for statistical inference when the underlying distribution is significantly skewed. 
Alternative statistics, ``correcting'' for the asymmetry, were offered and considered; see  \cite{asymm,disintegr} and discussion therein. 	 
	\item If the tails are very heavy, then even the minimized ``truncated'', ``green'' bound in \eqref{eq:<B} is not much less than $1$ even if $n$ is as large as $1000$ and the underlying distribution is symmetric. This may be in broadly considered agreement with the fact, established in \cite{shepp-etal_self-norm}, that if the the underlying distribution is in the domain of attraction of a stable law with index $\al<2$, then the limit distribution of the self-normalized sum and, equivalently, that of the Student statistic is not normal. 
	\item For almost all considered values of $n$, $d$, and $s$, the minimized ``truncated'' bound in \eqref{eq:<B} is significantly less than that in \eqref{eq:BS-trunc}, except in the Pareto case with $n=10000$ for a rather short interval of values $s$ near $7$, where, however, even the better bound is only slightly less than $1$. 
Conceivably, this deficiency might be fixed by using another triple of constants in place of the triple $\tau_2=(2.01, 1.02, 0.61)$. 
Moreover, when the tails are light enough, even the ``non-truncated'' bound in \eqref{eq:nonIID} significantly improves both on the ``truncated''  and ``non-truncated'' bounds in \eqref{eq:BS-trunc} and \eqref{eq:BS}.  
Thus, especially with the truncation tool, getting smaller constant factors may be more effective than insisting on the optimal order of moments even for the price of much greater constants. 
\end{enumerate}

It appears that, with the much smaller constant factors than in the preceding results, the bounds presented above may be approaching the state of being of use in statistical practice. 
There are additional resources to be tapped on. For instance, the proofs of Theorems~\ref{th:nonIID} and \ref{th:IID} rely to a large extent on a hybrid between the Chebyshev and Cantelli bounds, developed in \cite{between} specifically for the purposes of the present paper. One can similarly try to use and/or develop the much more accurate (but also much more complicated) upper bounds on large deviation probabilities given and discussed in \cite{pin-hoeff}; however, at that the proofs can be expected to be much harder to produce or read.  

\section{Proofs}\label{proofs}

\begin{proof}[Proof of Proposition~\ref{prop:Phi_n-Phi}]
Introduce $\La(a,z):=\Phi(u_{a,z})$, where $a\in(0,1)$ and $u_{a,z}:=\frac z{\sqrt{1+a(z^2-1)}}$, so that $\Phi_n(z)=\La(\frac1n,z)$ and $\Phi(z)=\La(0,z)$. 
By the mean value theorem, for some $b=b_z\in(0,a)$ 
\begin{equation*}
	\frac{\La(a,z)-\La(0,z)}a=\pd\La a(b,z)=\frac{u_{b,z}(1-u_{b,z}^2)\vpi(u_{b,z})}{2(1-b)},  
\end{equation*}
where $\vpi$ is the standard normal density function. 
So, to prove inequality \eqref{eq:Phi_n-Phi}, it suffices to note that $\sup_{u\in\R}|u(1-u^2)\vpi(u)|=2C$ and $\frac1{1-b}<\frac1{1-a}=\frac{n}{n-1}$ for $b\in(0,a)$ and $a=\frac1n$. 
That the  constant factor $C$ is the best possible in \eqref{eq:Phi_n-Phi} follows because, by l'Hospital's rule, $\frac{\La(a,z)-\La(0,z)}a\sim\pd\La a(a,z)$ as $a\downarrow0$. 
\end{proof}

The proof of Theorem~\ref{th:nonIID} is based, in part, on the following two lemmas. 

\begin{lemma}\label{lem:1}
Take any $\la$, $r_*$, $a$, $b$ in $(0,\infty)$. 
Take any $c$ and $r$ in $(0,\infty)$ such that 
\begin{equation*}
	c\ge\frac\la r\quad\text{and}\quad r\le r_*. 
\end{equation*}
Let $Y$ by any r.v.\ such that $\E Y=0$ and $\si:=\sqrt{\E Y^2}\in(0,\infty)$. Then 
\begin{equation}\label{eq:psi}
	\P(Y\ge c)\le\psi\Big(r_*,\frac\la\si\Big)r,\quad\text{where}\quad
	\psi(u,v):=\frac{u\wedge v}{v^2+(u\wedge v)^2}. 
\end{equation}
Also, 
\begin{equation}\label{eq:between}
	\P\big(Y\notin(-a,b)\big)\le\frac{4\si^2+(a-b)^2}{(a+b)^2}. 
\end{equation}
\end{lemma}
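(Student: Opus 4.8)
The plan is to prove the two displayed inequalities separately, since they rest on different classical devices. For \eqref{eq:psi}, I would begin with Cantelli's (one-sided Chebyshev) inequality: for a zero-mean r.v.\ $Y$ with $\E Y^2=\si^2$ and any $c>0$ one has $\P(Y\ge c)\le\si^2/(\si^2+c^2)$. Since the right-hand side decreases in $c$ and the hypothesis gives $c\ge\la/r$, this yields $\P(Y\ge c)\le\si^2/(\si^2+\la^2/r^2)=\si^2 r^2/(\si^2 r^2+\la^2)$. Writing $v:=\la/\si$, the claim \eqref{eq:psi} reduces, after dividing the target inequality through by $r$, to $h(r)\le h(r_*\wedge v)$, where $h(t):=t/(t^2+v^2)$; the point is that $\psi(r_*,v)$ is exactly $h(r_*\wedge v)$.

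The only genuine obstacle is to see why this reduced inequality holds, and this is where the specific shape of $\psi$ matters. The function $h$ is unimodal: since $h'(t)=(v^2-t^2)/(t^2+v^2)^2$, it increases on $(0,v]$, attains its global maximum $h(v)=1/(2v)$ at $t=v$, and decreases afterwards. Given $r\le r_*$ and $r_*\wedge v\le v$, a two-case split finishes the argument: if $r\le r_*\wedge v$, then both arguments lie in the increasing range $(0,v]$, so $h(r)\le h(r_*\wedge v)$; otherwise $r>r_*\wedge v$ together with $r\le r_*$ forces $r_*\wedge v=v$, whence $h(r_*\wedge v)=h(v)$ is the global maximum and again $h(r)\le h(r_*\wedge v)$.

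For \eqref{eq:between} I would use the quadratic-majorant technique: exhibit a nonnegative quadratic $g$ with $g(y)\ge\ii{y\notin(-a,b)}$ for every $y$, and then integrate, using $\E Y=0$ and $\E Y^2=\si^2$. The natural choice is the upward parabola with vertex at $\frac{b-a}2$ that passes through $(-a,1)$ and $(b,1)$, namely $g(y):=\frac4{(a+b)^2}\big(y-\frac{b-a}2\big)^2$. Since $\frac{b-a}2\in(-a,b)$ and $g(-a)=g(b)=1$, convexity gives $g\ge\ii{y\notin(-a,b)}$ everywhere; taking expectations, $\E g(Y)=\frac4{(a+b)^2}\big(\si^2+\big(\frac{b-a}2\big)^2\big)=\frac{4\si^2+(a-b)^2}{(a+b)^2}$, which is the asserted bound. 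The only thing needing care here is that $g$ dominates the indicator everywhere, not merely at the two endpoints; this follows from convexity of $g$ together with the vertex lying in $(-a,b)$. I expect this second part to be routine once the parabola is written down, so the heart of the lemma is the reduction in \eqref{eq:psi} and the recognition of $\psi(r_*,v)$ as $h(r_*\wedge v)$.
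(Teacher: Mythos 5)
Your proof of \eqref{eq:psi} is correct and is essentially the paper's own argument: Cantelli's inequality applied at the threshold $\la/r$ gives $\P(Y\ge c)\le r^2/(r^2+v^2)$ with $v=\la/\si$, and the unimodality of $t\mapsto t/(t^2+v^2)$ (increasing on $(0,v]$, decreasing on $[v,\infty)$) reduces the matter to the same two-case split the paper uses; your observation that $\psi(r_*,v)=h(r_*\wedge v)$ is just a compact restatement of that. The difference is in \eqref{eq:between}: the paper does not prove this inequality at all but cites it from the companion reference \cite{between} (described there as a ``hybrid between the Chebyshev and Cantelli bounds''), whereas you supply a self-contained derivation via the quadratic majorant $g(y)=\frac{4}{(a+b)^2}\bigl(y-\frac{b-a}{2}\bigr)^2$, which dominates $\ii{y\notin(-a,b)}$ because $g(-a)=g(b)=1$ and the vertex lies in $(-a,b)$, and whose expectation is exactly $\frac{4\si^2+(a-b)^2}{(a+b)^2}$. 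This is a valid and arguably preferable route, since it makes the lemma independent of the external reference at the cost of only a few lines; the one point to state explicitly is that the domination of the indicator follows from $g$ being decreasing on $\bigl(-\infty,\frac{b-a}{2}\bigr]$ and increasing on $\bigl[\frac{b-a}{2},\infty\bigr)$ with $-a\le\frac{b-a}{2}\le b$, which you do indicate. No gaps.
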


\begin{proof}[Proof of Lemma~\ref{lem:1}]
By the condition $c\ge\frac\la r$ and Cantelli's inequality, 
\begin{equation*}
	\P(Y\ge c)\le\P(Y\ge\tfrac\la r)\le\tfrac{\si^2}{\si^2+(\la/r)^2}
	=\tfrac{r^2}{r^2+v^2}, 
	\quad\text{where}\quad v:=\la/\si.   
\end{equation*}
Note that $\frac r{r^2+v^2}$ increases in $r\in[0,v]$ 
and decreases in $r\in[v,\infty)$. So, if $r_*\le v$, then the condition $r\le r_*$ implies 
$\frac r{r^2+v^2}\le\frac{r_*}{r_*^2+v^2}=\psi(r_*,v)$. 
If now $r_*\ge v$, then 
$\frac r{r^2+v^2}\le\frac v{v^2+v^2}=\psi(r_*,v)$, so that the inequality in \eqref{eq:psi} holds in this case as well. 
As for inequality \eqref{eq:between}, it is given in \cite{between}. 
\end{proof}

\begin{lemma}\label{lem:2}
For any positive real numbers $x,x_1,x_2$ such that $x\ge x_1\vee x_2$, 
one has 
\begin{equation}\label{eq:x Psi(x)}
	x_1\Psi(x)
	\le
	\Psi^*(x_2), 
\end{equation}
where 
\begin{align*}
\Psi&:=1-\Phi, \\
	\Psi^*(x)&:=0.17\ii{0<x<0.752}+x\Psi(x)\ii{x\ge0.752}. 
\end{align*}
\end{lemma}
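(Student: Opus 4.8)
The plan is to recognize $\Psi^*$ as the non-increasing upper envelope of the function $g(x):=x\Psi(x)$, and then to read \eqref{eq:x Psi(x)} off monotonicity. Concretely, I would first prove the identity
\[
\Psi^*(x)=\sup_{t\ge x}t\Psi(t)\qquad(x>0),
\]
after which the lemma is almost immediate. Indeed, since $x\ge x_1$ and $\Psi(x)\ge0$, one has $x_1\Psi(x)\le x\Psi(x)=g(x)$; and since $x\ge x_2$, the point $x$ lies in $\{t\ge x_2\}$, so $g(x)\le\sup_{t\ge x_2}g(t)=\Psi^*(x_2)$, which is exactly the asserted bound. Thus the whole content is packed into the envelope identity.

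That identity I would establish by elementary calculus on $g$. Using $\Psi'=-\vpi$ (with $\vpi$ the standard normal density, as in the proof of Proposition~\ref{prop:Phi_n-Phi}), one computes $g'(x)=\Psi(x)-x\vpi(x)$ and then $g''(x)=(x^2-2)\vpi(x)$. Hence $g'$ is strictly decreasing on $(0,\sqrt2)$ and strictly increasing on $(\sqrt2,\infty)$, with $g'(0^+)=\Psi(0)=\tfrac12>0$ and $\lim_{x\to\infty}g'(x)=0$. A direct evaluation gives $g'(\sqrt2)<0$; therefore on $(\sqrt2,\infty)$ the function $g'$ increases from a negative value up to the limit $0$ and so stays negative there, while on $(0,\sqrt2)$ it decreases from $\tfrac12$ through a single sign change. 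Consequently $g'$ has a unique zero $x_*\in(0,\sqrt2)$, the function $g$ increases on $(0,x_*)$ and decreases on $(x_*,\infty)$, and $\max_{t>0}g(t)=g(x_*)$. For $x\ge x_*$ the supremum $\sup_{t\ge x}g(t)$ equals $g(x)=x\Psi(x)$, and for $0<x\le x_*$ it equals the global maximum $g(x_*)$. Matching this against the definition of $\Psi^*$, namely $\Psi^*(x)=0.17\,\ii{0<x<0.752}+x\Psi(x)\,\ii{x\ge0.752}$, requires exactly the two numerical facts $x_*\le0.752$ and $g(x_*)\le0.17$.

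The main obstacle is precisely this numerical calibration, since both constants are tight: the maximizer satisfies $\Psi(x_*)=x_*\vpi(x_*)$, whence $g(x_*)=x_*^2\,\vpi(x_*)\approx0.16998$, just under the stated $0.17$, with $x_*\approx0.7518$, just under the threshold $0.752$. I would verify $g'(0.752)<0$ (which forces $x_*<0.752$, so that $g$ is decreasing on $[0.752,\infty)$ and the branch $x\Psi(x)\,\ii{x\ge0.752}$ is correct), and then bound $g(x_*)$ from above rigorously rather than by tabulated values: since $g''<0$ on $(0,\sqrt2)$ and $x_*\in(0,\sqrt2)$, the graph of $g$ is concave near its maximizer, so bracketing $x_*$ between nearby points $x_-<x_+$ with $g'(x_-)>0>g'(x_+)$ and using that the two tangent lines lie above the concave graph yields a rigorous upper bound on $g(x_*)$ that can be driven below $0.17$. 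Here $\Psi$ and $\vpi$ are evaluated by validated enclosures (e.g.\ Mills-ratio or continued-fraction bounds for $\Psi$). Once these two inequalities are secured, the envelope identity, and with it \eqref{eq:x Psi(x)}, follows.
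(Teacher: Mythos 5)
Your proposal is correct, and its skeleton is the same as the paper's: reduce \eqref{eq:x Psi(x)} to the single envelope inequality $\sup\{t\Psi(t)\colon t\ge z\}\le\Psi^*(z)$ and then chain $x_1\Psi(x)\le x\Psi(x)\le\sup_{t\ge x_2}t\Psi(t)$. (One cosmetic point: what you call an identity is really only the inequality $\sup_{t\ge x}t\Psi(t)\le\Psi^*(x)$ --- for $0<x<0.752$ the supremum is $\approx0.16998<0.17$ --- but the inequality is all you use, so nothing is lost.) Where you genuinely diverge is in how the unimodality of $g(t):=t\Psi(t)$ and the bound $\sup g<0.17$ are certified. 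The paper invokes the known log-concavity of $\Psi$, so that $L:=\ln g$ is concave on all of $(0,\infty)$; global concavity delivers in one stroke both the unimodality (from the sign change $L'(0.751)>0>L'(0.752)$) and the tight numerical bound via the tangent-line estimate $L(x)\le L(0.752)+L'(0.752)(0.751-0.752)<\ln 0.17$. You instead compute $g''(t)=(t^2-2)\vpi(t)$ directly, which makes the argument self-contained (no citation needed) at the cost of two extra steps: a separate limit argument to rule out a zero of $g'$ on $(\sqrt2,\infty)$, where $g$ is convex rather than concave, and the restriction of your tangent-line bracketing to the concavity window $(0,\sqrt 2)$. Both routes end at the same two delicate numerical checks ($x_*<0.752$ and $g(x_*)<0.17$, with $g(x_*)=x_*^2\vpi(x_*)\approx0.16998$ and $g'(0.752)\approx-9\times10^{-5}$), and your plan for verifying them rigorously via tangent lines over a bracketing interval is exactly the device the paper applies to $L$. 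So the proof goes through as proposed; the paper's version is marginally shorter because concavity of $L$ is global, while yours is marginally more elementary because it needs no log-concavity input.
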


\begin{proof}[Proof of Lemma~\ref{lem:2}]
It is well-known that the function $\Psi$ is log-concave; see e.g.\ \cite{HKP,pin99}. 
So, the function $L$ defined on $(0,\infty)$ by the formula $L(x):=\ln\big(x\Psi(x)\big)$ is concave, and hence $L(x)\le L(x_0)+L'(x_0)(x-x_0)$ for any $x$ and $x_0$ in $(0,\infty)$. 
Also, $L'(0.751)>0>L'(0.752)$ and $L(0.752)+L'(0.752)(0.751-0.752)<\ln0.17$. 
This implies that $L<\ln 0.17$ on $(0,\infty)$ and $L$ is decreasing on $(0.752,\infty)$, whence 
$\sup\{x\Psi(x)\colon x\ge z\}\le\Psi^*(z)$ for all $z\in(0,\infty)$. 
Now the lemma follows.  
\end{proof}

\begin{proof}[Proof of Theorem~\ref{th:nonIID}]  
This proof uses some of the ideas in the proof of \eqref{eq:nag} in \cite{nag02}, which were previously presented in \cite{novak00,novak05}. 
As mentioned before, there are a number of mistakes of various kinds 
in the proof in \cite{nag02}. 
For instance (in the notations of \cite{nag02}), a bound on $|\Phi(\frac{(1-\vp)r\si}{\sqrt{n}\,\si_n(r)})-\Phi(\frac{(1-\vp)r}{\sqrt{n}})|$ analogous to that on $|\Phi(\frac{r\si}{\sqrt{n}\,\si_n(r)})-\Phi(\frac{r}{\sqrt{n}})|$ in \cite[(1.12)]{nag02} is missing there; moreover, the same bound in \cite[(1.12)]{nag02} must have $(\frac\si{\si_n(r)}\wedge1)^2$ instead of $(\frac\si{\si_n(r)}\wedge1)$. 
We have also produced and utilized some new ideas in this proof. 
One of them is presented in Lemma~\ref{lem:1} above, which depends on the result of \cite{between}, specifically developed for the purposes of the present paper. 

Without loss of generality, assume that 
\begin{equation*}
	\be_2=1.
\end{equation*}
Take any 
\begin{equation}\label{eq:params}
\begin{gathered}
	\ka\in(0,\infty),\quad \vp_4\in(0,\tfrac12),\quad \vp_3\in(0,\infty),\quad \vp_2\in(0,1),\\  
	\th_3\in(0,1),\quad \th_4\in(0,\infty)  
\end{gathered}	
\end{equation}
and introduce  
\begin{gather}
\De:=\De(z):=\P(T\le z)-\Phi(z) \label{eq:De}\\
\intertext{and also}
r_3:=\be_3,\quad r_4:=\tbe_4^{1/2},\quad r_6:=\frac{\tbe_6}{\be_3^3}, \label{eq:r_j}\\ 
\vp:=\ka r_4,\quad \tvp_4:=\frac{\vp_4}\ka.  \label{eq:vp}
\end{gather}

It suffices to show that 
\begin{align*}
	|\De|=|\De(z)|&\le A_3 r_3+A_4 r_4+A_6 r_6, 
\end{align*}
where without loss of generality let us assume that  
\begin{equation*}
	z>0. 
\end{equation*}

Consider the following three cases.  

\emph{Case 1 (``small $n$''): $\vp\ge\vp_4$ or $r_3\ge\vp_3$.}\quad Note that 
\begin{equation*}
	\vp\ge\vp_4 \iff r_4\ge\tvp_4. 
\end{equation*}
So, 
\begin{align}
	|\De|\le1\le &
	(A_{3,1}\,r_3)\vee(A_{4,1}\,r_4)\vee(A_{6,1}\,r_6) \notag \\
	\le &A_{3,1}\,r_3+A_{4,1}\,r_4+A_{6,1}\,r_6,\quad\text{where} \label{eq:case1}\\
	&A_{3,1}:=\frac1{\vp_3},\quad A_{4,1}:=\frac1{\tvp_4},\quad A_{6,1}:=0.  \notag 
\end{align}


\emph{Case 2 (``large $n$'' \& ``large deviations''): $\vp<\vp_4$ \& $r_3<\vp_3$ \& 
$z\ge\frac{\th_3}{r_3}\wedge\frac{\th_4}{r_4}
$.}\quad 
Then, by \eqref{eq:De} and \eqref{eq:psi}, 
\begin{equation*}
	|\De|\le(P_1+P_2)\vee\Psi(z), 
\end{equation*}
where 
\begin{align*}
	P_1&:=\P\big(T>z,V>1-\vp_2\big) \\
	&\le\P(S>(1-\vp_2)z)
	\le\big(\psi(\vp_3,\tth_3)r_3)\vee\big(\psi(\tvp_4,\tth_4)r_4), \\
	\tth_j&:=(1-\vp_2)\th_j, \\
		P_2&:=\P\big(V\le1-\vp_2\big)
		=\P\Big(\sum_1^n(\E X_i^2-X_i^2)\ge\tvp_2\Big) 
		\le\psi(\tvp_4,\tvp_2)r_4, \\
	\tvp_2&:=\vp_2(2-\vp_2). 
\end{align*}
Note also that the currently assumed case conditions $\vp<\vp_4$ \& $r_3<\vp_3$ \& 
$z>\frac{\th_3}{r_3}\wedge\frac{\th_4}{r_4}$ imply 
$z>\frac{\th_3}{r_3}>\frac{\th_3}{\vp_3}$ or 
$z>\frac{\th_4}{r_4}>\frac{\th_4}{\tvp_4}$. 
So, Lemma~\ref{lem:2} 
yields 
\begin{equation*}
	\Psi(z)\le\big[\Psi^*(\tfrac{\th_3}{\vp_3})\,\tfrac{r_3}{\th_3}\big]
	\vee\big[\Psi^*(\tfrac{\th_4}{\tvp_4})\,\tfrac{r_4}{\th_4}\big]. 
\end{equation*}
Thus, 
\begin{align}
	|\De|
	\le &A_{3,2}\,r_3+A_{4,2}\,r_4+A_{6,2}\,r_6,\quad\text{where} \label{eq:case2} \\
	&A_{3,2}:=\psi(\vp_3,\tth_3)\vee\big[\Psi^*(\tfrac{\th_3}{\vp_3})\,\tfrac1{\th_3}\big], \notag \\ &A_{4,2}:=[\psi(\tvp_4,\tth_4)+\psi(\tvp_4,\tvp_2)]\vee\big[\Psi^*(\tfrac{\th_4}{\tvp_4})\,\tfrac1{\th_4}\big], \notag \\ 
	&A_{6,2}:=0. \notag  
\end{align}

\emph{Case 3 (``large $n$'' \& ``moderate deviations''): $\vp<\vp_4$ \& $r_3<\vp_3$ \& 
$z<\frac{\th_3}{r_3}\wedge\frac{\th_4}{r_4}$.}\quad In this case, note that 
\begin{equation*}
	\{T\le z\}=\{T_z\le z\},\quad\text{where}\quad
	T_z:=S-z\big(\sqrt{1+\eta}-1\big),\quad\text{and}\quad\eta:=V^2-1. 
\end{equation*}
Note also that the expression $S-z\big(\sqrt{1+\eta}-1\big)$ for $T_z$ is convex in $(S,\eta)$, so that 
its linear approximation \big(at the point $(\E S,\E\eta)=(0,0)$\big)
\begin{equation*}
	S_z:=S-z\eta/2
\end{equation*}
never exceeds $T_z$, whence 
\begin{equation*}
	\de:=\frac{T_z-S_z}z=1+\frac\eta2-\sqrt{1+\eta}\ge0. 
\end{equation*}
Therefore and because $\P(T\le z)=\P(T_z\le z)$, one has 
\begin{equation*}
	\P\big(S_z\le(1-\vp)z\big)-\P(\de>\vp)
	\le\P(T\le z)\le\P(S_z\le z). 
\end{equation*}
In view of \eqref{eq:De}, it follows that 
\begin{align*}
	\De&\le\BE+D(1)\quad\text{and} \\
	-\De&\le\BE+\P(\de>\vp)+D(1-\vp)+\tD_\vp,\quad\text{where} 
\end{align*}
\begin{align}
	\BE&:=\sup_{u\in\R}\Big|\P\big(S_z\le u\big)-\Phi\Big(\frac u{\si_z}\Big)\Big|, \notag \\
	\si_z&:=\sqrt{\E S_z^2}=\sqrt{\sum_1^n\E X_{i,z}^2}, \notag 
	\\
	D(u)&:=\Big|\Phi(uz)-\Phi\Big(\frac{uz}{\si_z}\Big)\Big|, \label{eq:D(u)}\\
	\tD_\vp&:=\sup_{x\ge0}\big[\Phi(x)-\Phi\big((1-\vp)x\big)\big]. \label{eq:tD_vp}
\end{align}
Thus, 
\begin{equation}\label{eq:De<,case3}
	|\De|\le\BE+\P(\de>\vp)+D(1)\vee D(1-\vp)+\tD_\vp. 
\end{equation}

Note also that 
\begin{gather*}
	S_z=\sum_1^n X_{i,z},\quad\text{where}\\
	X_{i,z}:=X_i-zY_i/2\quad\text{and}\quad Y_i:=X_i^2-\E X_i^2, \quad\text{whence}\\
	\eta=\sum_1^n Y_i. 
\end{gather*}
By a recent result of Shevtsova \cite{shev}, 
\begin{equation}\label{eq:BE<initial}
	\BE\le0.56\,\frac{\be_{3,z}}{\si_z^3},
\end{equation}
where
\begin{align}
	\be_{3,z}&:=\sum_1^n\E|X_{i,z}|^3
	\le\sum_1^n\E\Big(|X_i|+\frac z2|Y_i|\Big)^3
	\le\frac{\be_3}{(1-\al)^2}+\Big(\frac z2\Big)^3\frac{\tbe_6}{\al^2}, \label{eq:be3h} 
\end{align}
for any 
\begin{equation}\label{eq:al}
	\al\in(0,1); 
\end{equation}
the second inequality in \eqref{eq:be3h} follows from the elementary inequality 
$(a+b)^3\le\frac{a^3}{(1-\al)^2}+\frac{b^3}{\al^2}$ for all $a$ and $b$ in $[0,\infty)$ and $\al\in(0,1)$. 
Recalling also the condition 
$z<\frac{\th_3}{r_3}\wedge\frac{\th_4}{r_4}$ and definitions \eqref{eq:r_j}, one has  
\begin{equation}\label{eq:be3h over}
		\be_{3,z}
	\le\frac1{(1-\al)^2}\,r_3+\frac{\th_3^3}{8\al^2}\,r_6.  
\end{equation}
Next, 
\begin{equation}\label{eq:si_z=}
	\si_z^2=\sum_1^n\E X_{i,z}^2=1+\Big(\frac z2\Big)^2\tbe_4-z\sum_1^n\E X_i^3\ge1-z\be_3>1-\th_3. 
\end{equation}
So, \eqref{eq:BE<initial} and \eqref{eq:be3h over} yield 
\begin{equation}\label{eq:BE<final}
	\BE\le\frac{0.56}{(1-\th_3)^{3/2}}\Big(\frac1{(1-\al)^2}\,r_3+\frac{\th_3^3}{8\al^2}\,r_6\Big). 
\end{equation} 

Further, since 
$0<\vp<\vp_4<\frac12$, 
one has 
$\de>\vp\iff\eta\notin[2\vp-2\sqrt{2\vp},2\vp+2\sqrt{2\vp}\,]$. 
So, by \eqref{eq:between}, \eqref{eq:r_j}, and \eqref{eq:vp},  
\begin{equation}\label{eq:P(de<vp)<}
	\P(\de>\vp)\le\frac{4r_4^2+16\vp^2}{32\vp}=\frac{1+4\ka^2}{8\ka}\,r_4. 
\end{equation}

Next, by \eqref{eq:D(u)}, for any $u\in[0,\infty)$, 
\begin{equation}\label{eq:D(u)<}	
	D(u)\le uz\Big|\frac1{\si_z}-1\Big|
	\vpi\Big(\frac{uz}{\si_z\vee1}\Big), 
\end{equation}
where $\vpi$ is the standard normal density function. 
By the equalities in \eqref{eq:si_z=} and the case conditions $\vp<\vp_4$ and  
$z<\frac{\th_3}{r_3}\wedge\frac{\th_4}{r_4}$, 
\begin{align}
	|\si_z^2-1|&\le z\be_3+\Big(\frac z2\Big)^2\tbe_4
	=zr_3+\Big(\frac z2\Big)^2 r_4^2
	\le zr_3+\Big(\frac z2\Big)^2\tvp_4 r_4 \label{eq:|si_z-1|}\\
\intertext{and}	
	|\si_z^2-1|&\le zr_3+\Big(\frac z2\Big)^2 r_4^2\le\th_3+\th_4^2/4.  \label{eq:|si_z-1|,cont}
\end{align}
Writing $|\frac1{\si_z}-1|=\frac{|\si_z^2-1|}{\si_z+\si_z^2}$, and using \eqref{eq:D(u)<} and \eqref{eq:|si_z-1|}, one has 
\begin{gather*}
	D(u)\le D_1(u)+D_2(u),\quad\text{where} \\
	D_1(u):=r_3\,\frac{v^2\vpi(v)}u\,\rho_2,\quad
	D_2(u):=r_4\,\frac{\tvp_4}{4}\,\frac{v^3\vpi(v)}{u^2}\,\rho_3, \\
	v:=\frac{uz}{\si_z\vee1},\quad\rho_j:=\frac{(\si_z\vee1)^j}{\si_z+\si_z^2}. 
\end{gather*}
If $\si_z\le1$, then by \eqref{eq:si_z=} for $j=2,3$
\begin{equation*}
	\rho_j=\frac1{\si_z+\si_z^2}\le\rho_*:=\frac1{1-\th_3+\sqrt{1-\th_3}}. 
\end{equation*}
If $\si_z>1$, then by \eqref{eq:|si_z-1|,cont} for $j=2,3$ 
\begin{equation*}
	\rho_j=\frac{\si_z^j}{\si_z+\si_z^2}
	=\frac1{\si_z^{1-j}+\si_z^{2-j}}
	\le\rho_{**,j}:=\frac1{\si_*^{1-j}+\si_*^{2-j}},  
\end{equation*}
where
\begin{equation*}
	\si_*:=\sqrt{1+\th_3+\th_4^2/4}.
\end{equation*}
Note also that 
\begin{equation*}
\sup_{v>0}v^j\vpi(v)=s_j:=\frac1{\sqrt{2\pi}}\Big(\frac je\Big)^{j/2} 	
\end{equation*}
for $j=2,3$. 
Therefore, recalling also the condition $\vp<\vp_4$, one has 
\begin{equation}\label{eq:D<}
	D(1)\vee D(1-\vp)
	\le 
	r_3\,\frac{s_2}{1-\vp_4}\,(\rho_*\vee\rho_{**,2})
	+r_4\,\frac{\tvp_4}{4}\,\frac{s_3}{(1-\vp_4)^2}\,(\rho_*\vee\rho_{**,3}). 
\end{equation}

Next, let us estimate $\tD_\vp$. 
First here, one can use 
a special-case l'Hospital-type rule for monotonicity, such as \cite[Proposition~4.1]{pin06}, to see that for each $x\in(0,\infty)$ the ratio $\frac{\Phi(x)-\Phi((1-t)x)}t$ increases in $t\in(0,1)$. 
On the other hand, for each $t\in(0,1)$ the expression $\Phi(x)-\Phi\big((1-t)x\big)$ attains its maximum in  $x\in(0,\infty)$ at $x=x_t$, where 
\begin{equation*}
	x_t:=\sqrt{-\frac{2\ln(1-t)}{t(2-t)}}. 
\end{equation*}
On recalling also the definition \eqref{eq:tD_vp} of $\tD_\vp$ and the conditions $0<\vp<\vp_4<\frac12$, 
it follows that 
\begin{equation}\label{eq:tD<}
	\tD_\vp\le R(\vp_4)\vp=R(\vp_4)\ka r_4, \quad\text{where}\quad
	R(\vp_4):=\frac{\Phi(x_{\vp_4})-\Phi\big((1-\vp_4)x_{\vp_4}\big)}{\vp_4}. 
\end{equation}

Collecting \eqref{eq:De<,case3}, \eqref{eq:BE<final}, \eqref{eq:P(de<vp)<}, \eqref{eq:D<}, and \eqref{eq:tD<}, one bounds $|\De|$ in Case~3 as follows: 
\begin{align}
	|\De|
	\le &A_{3,3}\,r_3+A_{4,3}\,r_4+A_{6,3}\,r_6,\quad\text{where} \label{eq:case3}\\
	&A_{3,3}:=\frac{0.56}{(1-\th_3)^{3/2}(1-\al)^2}
	+\frac{s_2(\rho_*\vee\rho_{**,2})}{1-\vp_4},  \notag 
	\\ &A_{4,3}:=\frac{1+4\ka^2}{8\ka}
	+\frac{\tvp_4 s_3(\rho_*\vee\rho_{**,3})}{4(1-\vp_4)^2}
		+R(\vp_4)\ka, \notag \\ 
	&A_{6,3}:=\frac{0.07}{\al^2}\Big(\frac{\th_3^2}{1-\th_3}\Big)^{3/2}. \notag 
\end{align}

Collecting now the bounds \eqref{eq:case1}, \eqref{eq:case2}, and \eqref{eq:case3} on $|\De|$ in Cases~1--3, one concludes that in all of the three cases 
\begin{align}
	|\De|
	\le &A_3\,r_3+A_4\,r_4+A_6\,r_6,\quad\text{where} \label{eq:final}\\
	&A_p:=A_{p,1}\vee A_{p,2}\vee A_{p,3} \notag 
\end{align} 
for $p=3,4,6$. 

Now one can arbitrarily select positive ``weights'' $w_3,w_4,w_6$ and then 
try numerical minimization of (say) $(w_3A_3)\vee(w_4A_4)\vee(w_6A_6)$ with respect to all the parameters: $\al$, $\vp_4$, $\vp_3$, $\vp_2$, $\ka$, $\th_3$, $\th_4$, within their specified ranges --- recall \eqref{eq:params} and \eqref{eq:al}. 
The target function here appears to have a great number of local minima, and so, it is hardly possible to find the global minimum. 
Even though the numerical minimization is imperfect, it should be clear that the bound in \eqref{eq:final} holds for all the allowable values of the parameters as specified in \eqref{eq:params}. 
The following table shows the values of the parameters $\al$, $\vp_4$, $\vp_3$, $\vp_2$, $\ka$, $\th_3$, $\th_4$ found by the mentioned numerical minimization for each of a few selected triples $(w_3,w_4,w_6)$, as well as the resulting triple $\tau_i$ of the coefficients $(A_3,A_4,A_6)$, corresponding to the so obtained values of the parameters. 

\begin{center} 
\begin{tabular}{c|c|c||c|c|c|c|c|c|c||c}
$w_3$ & $w_4$ & $w_6$ & $\al$ & $\vp_4$ & $\vp_3$ & $\vp_2$ & $\ka$ & $\th_3$ & $\th_4$ & triple \\
	\hline	\hline
\rule[-6pt]{0pt}{18pt} $1$	& $1$	& $1$ &
$\frac{2}{25}$ & $\frac{123}{10^3}$ & $\frac{2703}{10^3}$ & $\frac{22}{125}$ & $\frac{43}{250}$ & $\frac{377}{10^3}$ & $\frac{5407}{10^3}$ & $\tau_1$		\\	
\hline					
\rule[-6pt]{0pt}{18pt} $1$	& $2$	& $1$ &
$\frac{27}{200}$ & $\frac{363}{10^3}$ & $\frac{1401}{10^3}$ & $\frac{19}{50}$ & $\frac{91}{250}$ & $\frac{413}{10^3}$ & $\frac{3167}{10^3}$ &$\tau_2$	\\	
\hline					
\rule[-6pt]{0pt}{18pt} $1$	& $1$	& $10^6$ &
$\frac{381}{500}$ & $\frac{471}{10^3}$ & $\frac{6927}{10^3}$ & $\frac{23}{10^3}$ & $\frac{79}{
   50}$ & $\frac{9}{200}$ & $\frac{3809}{10^3}$	& $\tau_3$	\\	
\hline					
\rule[-6pt]{0pt}{18pt} $1$	& $10^{-5}$	& $10^{-6}$ &
$\frac{8.39}{10^5}$ & $\frac{3.17}{10^5}$ & $1.32$ & $\frac{3.49}{10^5}$ & $\frac{9.97}{10^7}$ & $0.3738$ & $2.69$ & $\tau_4$	\\	
\hline					
\hline					
\end{tabular}	
\end{center}

Now Theorem~\ref{th:nonIID} is completely proved. 
\end{proof}

\begin{proof}[Proof of Theorem~\ref{th:IID}]  
This proof is quite similar to that of Theorem~\ref{th:nonIID}. 
The only essential difference that, instead of the constant $0.56$ in \eqref{eq:BE<initial} one can now use the better constant $0.4785$, according to a recent result of Tyurin \cite{tyurin}. 
Because we cannot find the global minima, it sometimes turns out that the numerical minimization with the better constant $0.4785$ produces results worse (or not quite better) than those obtained using the worse constant $0.56$. (!) In such cases, we used the values of the parameters $\al$, $\vp_4$, $\vp_3$, $\vp_2$, $\ka$, $\th_3$, $\th_4$ found in the general, non-iid setting ---   
with the worse constant $0.56$ and with the same weights $(w_3,w_4,w_6)$; the resulting triples are denoted as $\ttau_{i,2}$, with the second subscript $2$. Otherwise, the triple's second subscript is $1$, as in $\ttau_{1,1}$, $\ttau_{3,1}$, and $\ttau_{4,1}$. 
See the table below. 

\begin{center} 
\begin{tabular}{c|c|c||c|c|c|c|c|c|c||c}
$w_3$ & $w_4$ & $w_6$ & $\al$ & $\vp_4$ & $\vp_3$ & $\vp_2$ & $\ka$ & $\th_3$ & $\th_4$ & triple \\
	\hline	\hline
\rule[-6pt]{0pt}{18pt} $1$	& $1$	& $1$ &
$\frac{41}{500}$ & $\frac{113}{500}$ & $\frac{277}{100}$ & $\frac{39}{200}$ & $\frac{83}{500}$ & $\frac{409}{10^3}$ & $\frac{4467}{10^3}$	& $\ttau_{1,1}$ \\	
\hline					
\rule[-6pt]{0pt}{18pt} $1$	& $1$	& $1$ &
$\frac{2}{25}$ & $\frac{123}{10^3}$ & $\frac{2703}{10^3}$ & $\frac{22}{125}$ & $\frac{43}{250}$ & $\frac{377}{10^3}$ & $\frac{5407}{10^3}$ &$\ttau_{1,2}$	\\	
\hline						
\rule[-6pt]{0pt}{18pt} $1$	& $2$	& $1$ &
$\frac{27}{200}$ & $\frac{363}{10^3}$ & $\frac{1401}{10^3}$ & $\frac{19}{50}$ & $\frac{91}{250}$ & $\frac{413}{10^3}$ & $\frac{3167}{10^3}$	& $\ttau_{2,2}$ \\	
\hline
\rule[-6pt]{0pt}{18pt} $1$	& $2.1$	& $1$ &
$0.14$ & $\frac{275}{10^3}$ & $6.7$ & $0.42$ & $0.27$ & $0.44$ & $3.2$	& $\ttau_{2.1,1}$\\	
\hline						
\rule[-6pt]{0pt}{18pt} $1$	& $1$	& $10^6$ &
$\frac{777}{10^3}$ & $\frac{1}{2}$ & $\frac{1381}{500}$ & $\frac{27}{10^3}$ & $\frac{451}{10^3}$ & $\frac{47}{10^3}$ & $\frac{4569}{500}$ & $\ttau_{3,1}$	\\	
\hline					
\rule[-6pt]{0pt}{18pt} $1$	& $10^{-5}$	& $10^{-6}$ &
$\frac{3}{10^4}$ & $\frac{43}{10^5}$ & $10.3$ & $\frac{13}{10^4}$ & $3.5$ & $0.401$ & $1.6$	& $\tau_{4,1}$ \\	
\hline					
\hline					
\end{tabular}	
\end{center}
For instance, one can see that the values of the parameters $\al$, $\vp_4$, $\vp_3$, $\vp_2$, $\ka$, $\th_3$, $\th_4$ resulting in the triple $\ttau_{1,2}$ are the same those used to obtain the triple $\tau_1$. Similarly, the values of the parameters for the triple $\ttau_{2,2}$ are the same those for the triple $\tau_2$.  
\end{proof}

\bibliographystyle{abbrv}


\bibliography{C:/Users/Iosif/Documents/mtu_home01-30-10/bib_files/citations}

\end{document}